\newtheorem{thm}{Theorem}
\newtheorem{prop}[thm]{Proposition}
\theoremstyle{remark}
\theoremstyle{definition}
\newcommand{\R}{\mathbb{R}}
\newcommand{\Rn}{{\mathbb{R}^n}}
\newcommand{\N}{\mathbb{N}}
\newcommand{\C}{\mathbb{C}}
\newcommand{\vol}{\operatornamewithlimits{vol}}
\newlength{\fixboxwidth}
\title{Volumes of unit balls of mixed sequence spaces}
\author{Henning Kempka\footnote{Fakult\"at f\"ur Mathematik, TU Chemnitz, Reichenhainer Str. 39, 09126 Chemnitz, Germany, {\tt henning.kempka@mathematik.tu-chemnitz.de};
this author was supported by the German science foundation (DFG) within the project KE 1847/1-1.},
Jan Vyb\'iral\footnote{(Corresponding author) Department of Mathematical Analysis, Charles University, Sokolovsk\'a 83, 186 00, Prague 8, Czech Republic, {\tt vybiral@karlin.mff.cuni.cz};
this author was supported by the ERC CZ grant LL1203 of the Czech Ministry of Education.}}
\begin{document}
\maketitle
\begin{abstract}
The volume of the unit ball of the Lebesgue sequence space $\ell_p^m$ is very well known since the times of Dirichlet.
We calculate the volume of the unit ball in the mixed norm $\ell^n_q(\ell_p^m)$, whose special cases are nowadays popular in machine learning under the name
of group lasso. We consider the real as well as the complex case.
The result is given by a closed formula involving the gamma function, only slightly more complicated than the one of Dirichlet.
We close by an overview of open problems.
\end{abstract}

\section{Introduction}
Exact formulas and estimates of volumes of convex bodies play a central role in functional analysis and geometry of Banach spaces
with numerous connections to probability theory, approximation theory and other areas of mathematics.
We refer to the classical book \cite{Pisier} for an extensive treatment.

Unfortunately, for most of the bodies it is out of reach to express their volume by simple formulas.
The most important exception from this rule is the volume of the unit ball of the Lebesgue sequence space
$$
B_p^m(\R)=\{x=(x_1,\dots,x_m)\in\R^m:\sum_{j=1}^m |x_j|^p\le 1\}
$$
for $0<p<\infty$ (with $B_\infty^m(\R)$ being just the cube $[-1,1]^m$ with volume $2^m$).
The volume of $B_p^m(\R)$ is very well known since the time of Dirichlet, cf. \cite{Diri,Edw}. It can be expressed directly with the help of
the gamma function as
\begin{equation}\label{eq:vol}
\vol(B_p^m(\R))=2^m\frac{\Gamma\Bigl(\frac1p+1\Bigr)^m}{\Gamma\Bigl(\frac mp+1\Bigr)},
\end{equation}
where $1/p$ is interpreted as zero for $p=\infty.$

The approach of Dirichlet is based on inductive formula for evaluation of multivariate integrals. Alternatively, a noninductive way
to calculate $\vol(B_p^m(\R))$ is based on Gaussian integrals. We review briefly this approach using the notation of \cite{SZ}.
For $0<p<\infty$ we define
$$
\Delta_p^m=\{x\in\R^m:x_j\ge 0\ \text{for all}\ j=1,\dots,m\ \text{and}\ \|x\|_p=1\}
$$
and the normalized cone measure
$$
\mu_p(A)=\frac{\lambda([0,1]\cdot A)}{\lambda([0,1]\cdot \Delta_p^m)},\qquad A\subset \Delta_p^m,
$$
where $\lambda$ is the usual Lebesgue measure in $\R^m$. Applying the \emph{polar decomposition identity} \cite{BCN}
$$
\frac{\int_{\R^m_+}f(x)d\lambda(x)}{\lambda([0,1]\cdot \Delta_p^m)}=m\int_0^\infty r^{m-1}\int_{\Delta_p^m}f(rx)d\mu_p(x)dr
$$
to $f(x)=e^{-x_1^p-\dots-x_m^p}$, one gets
$$
\frac{\Bigl(\int_0^\infty e^{-t^p}dt\Bigr)^m}{2^{-m}\vol(B_p^m(\R))}=m\int_0^\infty r^{m-1}e^{-r^p}dr.
$$
After the substitution $s=t^p$  and employing the definition of the gamma function, the formula \eqref{eq:vol} follows.

The results on volumes of unit balls of finite dimensional spaces can be usually translated into the language of entropy numbers
of their embeddings. Furthermore, by the results of Carl and Triebel \cite{Carl,CarlTriebel}, they have implications on the decay
of different approximation widths as well as on the decay of eigenvalues of compact operators. Finally, by the use of discretization
techniques, these results can be exploited to study these quantities also for embeddings of function spaces, see \cite{Vyb}
for an overview of this approach and many references. When applying this approach to weighted function spaces
or function spaces with dominating mixed smoothness, entropy numbers of embeddings of mixed Lebesgue spaces $\ell_q^n(\ell_p^m)$
start to play a role, cf. \cite{Vasil,V}. We refer also to \cite{David,EdNet,Lew,Tino} for other results on finite dimensional sequence spaces with mixed norm.

Since the advent of Lasso \cite{Tib} in 1996 and its success in analysis of high dimensional data,
there was a new wave of interest in geometry of finite dimensional sequence spaces. Ten years later it was
exhibited in the theory of compressed sensing \cite{CRT2,CT2,D} that the minimization of the $\ell_1$-norm can be used for recovery of sparse
data from a small amount of random linear measurements. To recover data with more complex sparsity patterns, other $\ell_1$-based norms were also studied. The most important
functional in this connection is given by
$$
\min_{x\in\R^{n\times m}}\sum_{j=1}^n\Bigl(\sum_{k=1}^m |x_{j,k}|^2\Bigr)^{1/2},
$$
which will be denoted by $\|x\|_{2,1}$ later on. Minimization of this norm is usually denoted by
\emph{group lasso} \cite{GL2,GL1} and is used to recover data with \emph{block sparsity}, i.e. double sequences $(x_{j,k})$,
for which the set
$$
\{j\in\{1,\dots,n\}: \exists\, k\in\{1,\dots,m\}\text{\ such that\ }x_{j,k}\not=0\}
$$
has cardinality much smaller than $n$, cf. \cite{GL3}. The rest of the paper is structured as follows.
In Section 2 we evaluate an auxiliary multivariate integral, in Section 3 we prove our main result, Theorem \ref{thm_mixed},
extend the result to the complex case and discuss few special cases in detail. We close in Section 4 with a list
of open problems.

\section{Auxiliary integrals}
In this section we want to calculate the value of the integrals
\begin{align*}
 A_{m}(\alpha,\beta)=\int_{\substack{a\ge 0\\ 0\le a_1+\dots+a_m\le 1}}
(1-a_1-\dots-a_m)^\beta a_1^\alpha\dots a_m^\alpha da
\end{align*}
for $m\in\N$ and $\alpha,\beta>-1$. The integration in this notation is with respect to the set 
$$
\{a=(a_1,\dots,a_m)\in\R^m: a_j\ge 0\text{\ for all\ } j=1,\dots,m\text{\ and\ } 0\le a_1+\dots+a_m\le 1\}.
$$
For that reason we use for $t>0$ the gamma function 
\begin{align*}
 \Gamma(t)=\int_0^\infty x^{t-1}e^{-x}dx
\end{align*}
with the well known properties $\Gamma(t+1)=t\Gamma(t)$, $\Gamma(1)=1$ and $\Gamma(1/2)=\sqrt{\pi}$. Furthermore, we need the beta function and the following identity  
\begin{align*}
 B(\alpha,\beta)=\int_0^1t^{\alpha-1}(1-t)^{\beta-1}dt=\frac{\Gamma(\alpha)\Gamma(\beta)}{\Gamma(\alpha+\beta)},
\end{align*}
which holds for all $\alpha,\beta>0$, see \cite{Davis}. 
Now, we are ready to prove the following identity for $A_m(\alpha,\beta)$.
\begin{thm}\label{thm_A}
 Let $m\in\N$ and $\alpha,\beta>-1$ then
 \begin{align*}
  A_{m}(\alpha,\beta)=
\frac{\Gamma(\alpha+1)^m\Gamma(\beta+1)}{\Gamma(m(\alpha+1)+\beta+1)}.
 \end{align*}
 By choosing $\alpha=\beta>-1$ we especially have
\begin{align*}
  A_{m}(\alpha,\alpha)=\frac{\Gamma(\alpha+1)^{m+1}}{\Gamma((m+1)(\alpha+1))}.
 \end{align*}
\end{thm}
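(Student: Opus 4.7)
The plan is to prove the identity by induction on $m$, which reduces the $m$-dimensional integral over the simplex to a one-dimensional beta integral at each step.

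\textbf{Base case ($m=1$).} The integral reduces to
$$
A_1(\alpha,\beta)=\int_0^1 a_1^\alpha (1-a_1)^\beta\, da_1 = B(\alpha+1,\beta+1)=\frac{\Gamma(\alpha+1)\Gamma(\beta+1)}{\Gamma(\alpha+\beta+2)},
$$
which coincides with the claimed formula for $m=1$.

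\textbf{Inductive step.} Assuming the formula holds for $m-1$ and all admissible $\alpha,\beta>-1$, I would write $A_m(\alpha,\beta)$ by Fubini, keeping $a_1,\dots,a_{m-1}$ fixed with $s=a_1+\dots+a_{m-1}\le 1$ and integrating in $a_m\in[0,1-s]$. The inner integral is
$$
\int_0^{1-s}a_m^\alpha (1-s-a_m)^\beta\, da_m.
$$
The substitution $a_m=(1-s)t$ transforms it into $(1-s)^{\alpha+\beta+1}B(\alpha+1,\beta+1)$. Thus
$$
A_m(\alpha,\beta)=B(\alpha+1,\beta+1)\cdot A_{m-1}(\alpha,\alpha+\beta+1),
$$
since the remaining outer integral is precisely the integral defining $A_{m-1}$ with the new parameter pair $(\alpha,\alpha+\beta+1)$.

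\textbf{Closing the induction.} Inserting the induction hypothesis gives
$$
A_{m-1}(\alpha,\alpha+\beta+1)=\frac{\Gamma(\alpha+1)^{m-1}\Gamma(\alpha+\beta+2)}{\Gamma(m(\alpha+1)+\beta+1)},
$$
and combining with $B(\alpha+1,\beta+1)=\Gamma(\alpha+1)\Gamma(\beta+1)/\Gamma(\alpha+\beta+2)$, the factor $\Gamma(\alpha+\beta+2)$ cancels and the desired formula emerges. The second identity then follows by specialising $\beta=\alpha$ and using $m(\alpha+1)+\alpha+1=(m+1)(\alpha+1)$.

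There is no serious obstacle here; the calculation is a routine Dirichlet-type reduction. The only minor point to watch is that in the inductive step the new second parameter $\alpha+\beta+1$ is still $>-1$ (indeed, $>0$), so the induction hypothesis applies, and that the substitution $a_m=(1-s)t$ is legitimate on the set $s<1$ (the boundary $s=1$ being of measure zero).
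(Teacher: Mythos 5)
Your proof is correct, but the reduction you use is genuinely different from the one in the paper. You integrate out the single coordinate $a_m$ over $[0,1-s]$ with $s=a_1+\dots+a_{m-1}$ fixed, which yields the recursion $A_m(\alpha,\beta)=B(\alpha+1,\beta+1)\,A_{m-1}(\alpha,\alpha+\beta+1)$: the second parameter grows by $\alpha+1$ at each step and the gamma factors telescope. The paper instead substitutes $b_m=a_1+\dots+a_m$ (slicing by the total sum) and rescales the remaining variables by $b_m$, which gives $A_m(\alpha,\beta)=B(m(\alpha+1),\beta+1)\,A_{m-1}(\alpha,\alpha)$, so the second parameter collapses to $\alpha$ immediately. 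Both recursions close the induction in one line; yours is the more elementary Dirichlet-style reduction (only a one-dimensional substitution at each stage, no multivariate change of variables with a Jacobian), while the paper's version has the side benefit of producing exactly the quantity $A_{m-1}(\alpha,\alpha)$ that reappears in the proof of the main theorem. One trivial slip: the new parameter $\alpha+\beta+1$ satisfies $\alpha+\beta+1>-1$ (which is all you need), but it need not be $>0$ — take $\alpha=\beta=-0.9$. This does not affect the argument.
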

\begin{proof}
First we can observe
\begin{align}\label{A_1}
 A_{1}(\alpha,\beta)=\int_{0}^1
(1-a)^\beta a^\alpha da=B(\alpha+1,\beta+1)=\frac{\Gamma(\alpha+1)\Gamma(\beta+1)}{\Gamma(\alpha+\beta+2)}.
\end{align}

The rest of the proof follows from a recursion formula. We use the substitution $(b_1,b_2,\dots,b_{m-1},b_m)=(a_1,a_2,\dots,a_{m-1},a_1+\dots+a_m)$ with Jacobian equal to one. It follows
 \begin{align*}
  A_{m}(\alpha,\beta)&=\int_{\substack{a\ge 0\\ 0\le a_1+\dots+a_m\le 1}}(1-a_1-\dots-a_m)^\beta a_1^\alpha\dots a_m^\alpha da\\
  &=\int_{\substack{b\ge 0\\ 0\le b_1+\dots+b_{m-1}\le b_m\le 1}}(1-b_m)^\beta b_1^\alpha\dots b_{m-1}^\alpha(b_m-b_1-\dots-b_{m-1})^\alpha db\\
  &=\int_0^1(1-b_m)^\beta\int_{\substack{b'\ge 0\\ 0\le b_1+\dots+b_{m-1}\le b_m}}b_1^\alpha\dots b_{m-1}^\alpha(b_m-b_1-\dots-b_{m-1})^\alpha db'db_m,\\
  \intertext{where $b'=(b_1,\dots,b_{m-1}).$ Then we substitute $c_1=b_1/b_m,\dots,c_{m-1}=b_{m-1}/b_m$ with $\left|\frac{db'}{dc}\right|=b_m^{m-1}$, which gives}  
  &=\int_0^1 (1-b_m)^\beta b_m^{m\alpha+m-1}db_m\int_{\substack{c\ge 0\\ 0\le c_1+\dots+c_{m-1}\le 1}}c_1^\alpha\dots c_{m-1}^\alpha(1-c_1-\dots-c_{m-1})^\alpha dc\\
  &=B(m(\alpha+1),\beta+1)A_{m-1}(\alpha,\alpha).
  \end{align*}
So we have the identity
  \begin{align}\label{A_rekursion}
  A_m(\alpha,\beta)&=A_{m-1}(\alpha,\alpha)\frac{\Gamma(m(\alpha+1))\Gamma(\beta+1)}{\Gamma(m(\alpha+1)+\beta+1)}.
 \end{align}
The rest of the proof follows by induction from \eqref{A_1} and \eqref{A_rekursion}.
\end{proof}

\section{The volume of the unit ball in $\ell^n_q(\ell_p^m)$}

We are interested in the volume of the unit ball of the mixed sequence space $\ell_q^n(\ell_p^m)$. First of all we treat the real case.
\subsection{Unit ball in $\ell^n_q(\ell_p^m(\R))$}
If $x=(x_{j,k})_{j,k=1}^{n,m}$ are real numbers, then we define for all $0<p,q\leq\infty$
$$
\|x\|_{p,q}=\Biggl(\sum_{j=1}^n\Bigl(\sum_{k=1}^m |x_{j,k}|^p\Bigr)^{q/p}\Biggr)^{1/q}\quad\text{and}\quad B_{p,q}^{m,n}(\R)=\{x\in\R^{n\times m}:\|x\|_{p,q}\le 1\}
$$
with appropriate modifications if $p$ or $q$ is equal to infinity. \\
In the non-mixed case of $\ell_p^m(\R)$ the following result is well known \cite[(1.17)]{Pisier}.
\begin{prop}\label{prop}
Let $m\in\N$ and $0<p\leq\infty$ then
\begin{align}\label{eq:prop}
\vol(B_p^m(\R))=2^m\frac{\Gamma\left(\frac1p+1\right)^m}{\Gamma\left(\frac mp+1\right)}.
\end{align}
\end{prop}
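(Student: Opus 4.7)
The plan is to reduce the computation of $\vol(B_p^m(\R))$ directly to the integral $A_m(\alpha,\beta)$ that was evaluated in Theorem \ref{thm_A}, which is the reason that integral was set up in the previous section. By the symmetry $x_j \mapsto \pm x_j$, the ball $B_p^m(\R)$ decomposes into $2^m$ congruent pieces, one in each orthant, so
$$
\vol(B_p^m(\R)) = 2^m \, \lambda\bigl(\{x \in \R^m : x_j \ge 0,\ x_1^p + \dots + x_m^p \le 1\}\bigr).
$$
This reduces everything to an integral over the positive part of the ball.

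Next I would make the substitution $a_j = x_j^p$ for $j = 1,\dots,m$, which sends the positive part of $B_p^m$ bijectively onto the simplex $\{a \ge 0 : a_1 + \dots + a_m \le 1\}$. The Jacobian factor is $\prod_{j=1}^m \frac{1}{p} a_j^{1/p - 1}$, so the positive-orthant volume becomes
$$
\frac{1}{p^m} \int_{\substack{a \ge 0 \\ 0 \le a_1 + \dots + a_m \le 1}} a_1^{1/p - 1} \cdots a_m^{1/p - 1}\, da = \frac{1}{p^m} A_m\!\left(\tfrac{1}{p} - 1,\, 0\right).
$$
The parameters $\alpha = 1/p - 1$ and $\beta = 0$ satisfy $\alpha, \beta > -1$ for every $0 < p < \infty$, so Theorem \ref{thm_A} applies and yields $A_m(1/p - 1, 0) = \Gamma(1/p)^m / \Gamma(m/p + 1)$.

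Combining these pieces and using the functional equation $\Gamma(1/p + 1) = (1/p)\, \Gamma(1/p)$ to absorb the factor $1/p^m$, I arrive at the claimed closed form. The case $p = \infty$ must be treated separately: there $B_\infty^m(\R) = [-1,1]^m$ has volume $2^m$, which matches the formula under the stated convention $1/p = 0$ (so both gamma factors are $\Gamma(1) = 1$). There is no real obstacle in this argument; the only point one has to be mildly careful about is checking that $\alpha = 1/p - 1 > -1$ so that Theorem \ref{thm_A} is actually applicable, and verifying that the substitution $a_j = x_j^p$ is a valid $C^1$ diffeomorphism on the interior, which is clear since $p > 0$.
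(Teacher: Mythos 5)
Your argument is correct: the reduction to the positive orthant, the substitution $a_j=x_j^p$ with Jacobian $p^{-m}\prod_j a_j^{1/p-1}$, the identification of the resulting integral as $A_m(1/p-1,0)$ with $\Gamma(1)=1$ absorbing the $\beta$-factor, and the final simplification via $\Gamma(1/p+1)=(1/p)\Gamma(1/p)$ all check out, as does the separate treatment of $p=\infty$. The paper itself does not prove Proposition \ref{prop}; it cites it as well known from \cite[(1.17)]{Pisier}, sketches in the introduction a noninductive derivation via the polar decomposition identity applied to $f(x)=e^{-x_1^p-\dots-x_m^p}$, and later rederives it in \eqref{eq:FB2} from the distributional identity $\int_{\R^d}e^{-\|x\|_K^p}dx=\vol(K)\Gamma(1+d/p)$. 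Your route is different from both of these: it is the Dirichlet-style substitution argument, and it is in fact precisely the $n=1$ instance of the machinery the paper uses to prove Theorem \ref{thm_mixed} (compare your integral with \eqref{s_int}). What your approach buys is economy within this paper — Theorem \ref{thm_A} is already proved, so Proposition \ref{prop} follows in three lines and the paper would become self-contained without invoking Pisier; what the Gaussian-integral approaches buy is independence from the inductive evaluation of $A_m$ and a one-line generalization to arbitrary norms and to the mixed case, as Section \ref{sec:FB} shows. The only cosmetic remark is that you should note $\beta=0>-1$ trivially and that for $1<p<\infty$ the integrand $a_j^{1/p-1}$ is singular on the coordinate hyperplanes but still integrable, which is exactly the content of the hypothesis $\alpha>-1$ in Theorem \ref{thm_A}; you already flag this, so nothing is missing.
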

For the mixed spaces $\ell_q^n(\ell_p^m(\R))$ the following theorem is our main result.
\begin{thm}\label{thm_mixed}
Let $m,n\in\N$ and $0<p,q\leq\infty$ then
\begin{align}\label{eq:mixed}
\vol(B_{p,q}^{m,n}(\R))=2^{mn}\frac{\Gamma\left(\frac1p+1\right)^{mn}}{\Gamma\left(\frac{nm}q+1\right)}\frac{\Gamma\left(\frac mq+1\right)^n}{\Gamma\left(\frac mp+1\right)^n}.
\end{align}
\end{thm}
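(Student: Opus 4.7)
The plan is to compute $\vol(B_{p,q}^{m,n}(\R))$ by slicing the matrix $x \in \R^{n \times m}$ according to the $n$ row-norms $r_j := \|x_{j,\cdot}\|_p$, reducing the problem to an application of Theorem \ref{thm_A}. First I will use the sign symmetry $x_{j,k} \mapsto \pm x_{j,k}$ to restrict integration to the positive orthant $\R_+^{nm}$, producing a prefactor of $2^{nm}$. Then, for each row independently, Proposition \ref{prop} combined with the scaling $y \mapsto r_j y$ shows that the positive-orthant $\ell_p^m$-ball of radius $r_j$ has volume $r_j^m\, \Gamma(1/p+1)^m / \Gamma(m/p+1)$; differentiating in $r_j$ yields the corresponding Lebesgue density of $r_j$ on the $j$-th row.

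After integrating out the angular variables of every row, the problem reduces to
$$\vol(B_{p,q}^{m,n}(\R)) = 2^{nm} \left(\frac{m\,\Gamma(1/p+1)^m}{\Gamma(m/p+1)}\right)^n \int_{\substack{r_j \ge 0 \\ \sum_j r_j^q \le 1}} \prod_{j=1}^n r_j^{m-1}\, dr.$$
The substitution $b_j = r_j^q$ (with Jacobian $q^{-1}b_j^{1/q-1}$) turns the remaining integral into $q^{-n}\, A_n(m/q-1,\,0)$, which by Theorem \ref{thm_A} equals $q^{-n}\,\Gamma(m/q)^n / \Gamma(nm/q+1)$. Collecting all prefactors and using $t\,\Gamma(t) = \Gamma(t+1)$ with $t = m/q$ to absorb the $(m/q)^n$ into $\Gamma(m/q+1)^n$ should produce exactly \eqref{eq:mixed}.

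The computation is essentially careful bookkeeping, so the real obstacle is verifying the boundary cases $p = \infty$ and $q = \infty$, in which $1/p$ or $1/q$ must be read as $0$. For $q = \infty$ the formula follows directly from the product structure $B_{p,\infty}^{m,n}(\R) = (B_p^m(\R))^n$; for $p = \infty$ the same slicing argument applies with the cube $[0,r_j]^m$ (of volume $r_j^m$) in place of the positive-orthant $\ell_p^m$-ball, and a quick check shows that the resulting expression agrees with the common gamma-function formula. I expect nothing deeper than this in the whole argument, since the combinatorics have been prepared by Theorem \ref{thm_A}.
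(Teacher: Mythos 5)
Your argument is correct, and it is a genuinely different route from the paper's main proof. The paper proceeds by induction on $n$: after the substitution $s_{j,k}=x_{j,k}^p$ it peels off the last row, rescales the remaining $n-1$ rows to recover $\vol(B_{p,q}^{m,n-1}(\R))$, and evaluates the leftover $m$-dimensional integral via $A_{m-1}(1/p-1,1/p-1)$ and a beta integral; the base case is Proposition \ref{prop}. You instead integrate out each row at once using the layer-cake identity $\int_{\R_+^m}h(\|y\|_p)\,dy = mV_m\int_0^\infty h(r)r^{m-1}\,dr$ with $V_m=\Gamma(1/p+1)^m/\Gamma(m/p+1)$ (valid by Fubini, since the integrand depends on row $j$ only through $\|x_{j,\cdot}\|_p$), which reduces everything to the single radial integral $q^{-n}A_n(m/q-1,0)$; the bookkeeping $(m/q)^n\Gamma(m/q)^n=\Gamma(m/q+1)^n$ then gives \eqref{eq:mixed} exactly as you say. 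Your proof is noninductive and, because it only uses that the truncated row-ball of radius $r$ has volume $V_m r^m$, it immediately yields the more general formula \eqref{eq:ah} for $\ell_q^n(X_j)$ with arbitrary finite-dimensional quasi-normed $X_j$ --- in this respect it is closer in spirit to the alternative proof of Section \ref{sec:FB} (which achieves the same factorization via $\int e^{-\|x\|_K^q}dx=\Gamma(1+d/q)\vol(K)$ and Fubini) than to the paper's recursive argument, while still resting on the elementary Theorem \ref{thm_A} rather than on exponential integrals. Your treatment of the endpoint cases ($q=\infty$ via the product structure, $p=\infty$ via the cube $[0,r]^m$) also matches what the paper does in its subsection on special cases.
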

\begin{proof}
We give the proof for $\max(p,q)<\infty.$ The modifications necessary in case of $p=\infty$ or $q=\infty$ are described in the subsection on special cases below.\\ 
First of all we restrict the volume to positive $x\geq 0$ (meaning $x_{j,k}\geq0$ for all $j,k$)
\begin{align}
\vol(B_{p,q}^{m,n}(\R))&=\int_{B_{p,q}^{m,n}(\R)}1dx=2^{mn}\int_{B_{p,q}^{m,n}(\R), x\geq0}1dx\notag
\intertext{and using the substitution $s_{j,k}=x_{j,k}^p$ we obtain}
&=\frac{2^{mn}}{p^{mn}}\int_{s\in A_{p,q}^{m,n}(1)}\prod_{j=1}^n\prod_{k=1}^m s_{j,k}^{1/p-1}ds,\label{s_int}
\end{align} 
where the integral is over $A_{p,q}^{m,n}(t)={\displaystyle\Bigl\{s\in\R^{n\times m}:s\ge 0 \text{ and }\sum_{j=1}^n\Bigl(\sum_{k=1}^ms_{j,k}\Bigr)^{q/p}\le t\Bigr\}}$. Now, we can rewrite
\begin{align}
\vol(B_{p,q}^{m,n}(\R))
&=\frac{2^{mn}}{p^{mn}}\!\!\!\int\limits_{\substack{s\geq0\\0\le s_{n,1}+\dots+s_{n,m}\le 1}}\!\!\!\!\!\!(s_{n,1}\dots s_{n,m})^{1/p-1}\!\!\!\!\!\!\int\limits_{s'\in A_{p,q}^{m,n-1}(1-(s_{n,1}+\dots+s_{n,m})^{q/p})}\prod_{j=1}^{n-1}\prod_{k=1}^m s_{j,k}^{1/p-1}ds'\,ds_{n},\notag
\intertext{where $s'=(s_{j,k}: j=1,\dots,n-1\ \text{and}\ k=1,\dots, m)$ and $s_n=(s_{n,1},\dots,s_{n,m})$. We substitute for $j<n$ with $\sigma_{j,k}=\frac{s_{j,k}}{[1-(s_{n,1}+\dots+s_{n,m})^{q/p}]^{p/q}}$ and get}
&\hspace{-0em}=\frac{2^{mn}}{p^{mn}}\int\limits_{\substack{s\geq0\\0\le s_{n,1}+\dots+s_{n,m}\le 1}}(s_{n,1}\dots s_{n,m})^{1/p-1}\left[1-(s_{n,1}+\dots+s_{n,m})^{q/p}\right]^{\frac{m(n-1)}{q}}ds_n\times\notag\\
&\qquad\qquad\qquad\times\int_{\sigma\in A_{p,q}^{m,n-1}(1)}\prod_{j=1}^{n-1}\prod_{k=1}^m \sigma_{j,k}^{1/p-1}d\sigma\notag\\
&\hspace{-0em}=\vol(B_{p,q}^{m,n-1}(\R))\frac{2^{m}}{p^{m}}\int\limits_{\substack{s\geq0\\0\le s_{1}+\dots+s_{m}\le 1}}(s_{1}\dots s_{m})^{1/p-1}\left[1-(s_{1}+\dots+s_{m})^{q/p}\right]^{\frac{m(n-1)}{q}}ds,\label{B_rekursion}
\end{align}
where the last equality follows from comparison to \eqref{s_int}. Now, we evaluate the last integral in \eqref{B_rekursion} putting $\lambda_1=s_1,\dotsc,\lambda_{m-1}=s_{m-1}$ and $\lambda_m=s_1+\dotsb+s_m$
\begin{align}
\int_{\substack{s\geq0\\0\le s_{1}+\dots+s_{m}\le 1}}&(s_{1}\dots s_{m})^{1/p-1}\left[1-(s_{1}+\dots+s_{m})^{q/p}\right]^{\frac{m(n-1)}{q}}ds\notag\\
&\hspace{-5em}=\int_{\substack{\lambda\geq0\\0\leq\lambda_1+\dotsb+\lambda_{m-1}\leq\lambda_m\leq1}}(\lambda_1\dots\lambda_{m-1})^{1/p-1}(\lambda_m-(\lambda_1+\dotsb\lambda_{m-1}))^{1/p-1}(1-\lambda_m^{q/p})^{\frac{m(n-1)}{q}}d\lambda\notag\\
&\hspace{-5em}=\int_0^1(1-\lambda_m^{q/p})^{\frac{m(n-1)}{q}}\int_{\substack{\lambda'\geq0\\0\leq\lambda_1+\dotsb+\lambda_{m-1}\leq\lambda_m}}(\lambda_1\dots\lambda_{m-1})^{1/p-1}(\lambda_m-(\lambda_1+\dotsb\lambda_{m-1}))^{1/p-1}d\lambda'\,d\lambda_m\notag
\intertext{and substituting $\mu_j=\frac{\lambda_j}{\lambda_m}$ for $j\leq m-1$ we obtain}
&\hspace{-5em}=\int_0^1(1-\lambda_m^{q/p})^{\frac{m(n-1)}{q}}\lambda_m^{m/p-1}d\lambda_m\int_{\substack{\mu\geq0\\0\leq\mu_1+\dotsb+\mu_{m-1}\leq 1}}(\mu_1\dots\mu_{m-1})^{1/p-1}(1-(\mu_1+\dotsb\mu_{m-1}))^{1/p-1}d\mu\notag\\
&\hspace{-5em}=A_{m-1}(1/p-1,1/p-1)\int_0^1(1-\lambda_m^{q/p})^{\frac{m(n-1)}{q}}\lambda_m^{m/p-1}d\lambda_m.\label{int_1}
\end{align}
Finally, we use $t=\lambda_m^{q/p}$ and see that the last integral equals
\begin{align}
\int_0^1(1-\lambda_m^{q/p})^{\frac{m(n-1)}{q}}&\lambda_m^{m/p-1}d\lambda_m
=\frac pq\int_0^1(1-t)^{\frac{m(n-1)}q}t^{\frac mq-1}dt\notag\\
&=\frac pq B\left(\frac mq,\frac{m(n-1)}q+1\right)=\frac{p}{q}\frac{\Gamma(\frac{m}{q})\Gamma(\frac{m(n-1)}{q}+1)}{\Gamma(\frac{mn}{q}+1)}\label{int_2}
\end{align}
Using the recursive formula \eqref{B_rekursion} with \eqref{int_1} and \eqref{int_2} and Theorem \ref{thm_A} we obtain
\begin{align}
\vol(B_{p,q}^{m,n}(\R))&=\vol(B_{p,q}^{m,n-1}(\R))\frac{2^{m}}{p^{m}}A_{m-1}\left(\frac1p-1,\frac1p-1\right)
\frac{p}{q}\frac{\Gamma(\frac{m}{q})\Gamma(\frac{m(n-1)}{q}+1)}{\Gamma(\frac{mn}{q}+1)}\notag\\
&=\vol(B_{p,q}^{m,n-1}(\R))\frac{2^{m}}{p^{m}}\frac pq\frac{\Gamma(\frac 1p)^m}{\Gamma(\frac mp)}\frac{\Gamma(\frac mq)\Gamma(\frac{m(n-1)}q+1)}{\Gamma(\frac{mn}q+1)}\notag\\
&=\vol(B_{p,q}^{m,n-1}(\R))2^m\frac{mp}{mq}\frac{\Gamma(\frac mq)}{\Gamma(\frac mp)}\Gamma\left(\frac 1p+1\right)^m\frac{\Gamma(\frac{m(n-1)}q+1)}{\Gamma(\frac{mn}q+1)}\notag
\intertext{and using \eqref{B_rekursion} $n-2$ times we have}
&=\vol(B_{p,q}^{m,1}(\R))\left[2^m\frac{\Gamma(\frac mq+1)}{\Gamma(\frac mp+1)}\Gamma\left(\frac 1p+1\right)^m\right]^{n-1}\frac{\Gamma(\frac{m}q+1)}{\Gamma(\frac{mn}q+1)}.\label{eq_1}
\end{align} 
We observe that $B_{p,q}^{m,1}(\R)=B_p^m(\R)$ and using Proposition \ref{prop} we finally get
\begin{align*}
\vol(B_{p,q}^{m,n}(\R))&=2^{m(n-1)}2^m\frac{\Gamma(\frac1p+1)^m}{\Gamma(\frac mp+1)}\frac{\Gamma(\frac mq+1)^{n-1}}{\Gamma(\frac mp+1)^{n-1}}\Gamma\left(\frac 1p+1\right)^{m(n-1)}\frac{\Gamma(\frac{m}q+1)}{\Gamma(\frac{mn}q+1)}\\
&=2^{mn}\frac{\Gamma(\frac mq+1)^{n}}{\Gamma(\frac mp+1)^{n}}\frac{\Gamma(\frac1p+1)^{mn}}{\Gamma(\frac{mn}q+1)},
\end{align*}
which finishes the proof.
\end{proof}

\subsection{Alternative proof}\label{sec:FB}

We present an alternative proof of \eqref{eq:mixed}, which was communicated
to us by Prof. Franck Barthe shortly after the first version of this paper
was finished.

Let $K\subset \R^d$ be a symmetric convex body with non-empty interior
and let $\|\cdot\|_K$ be the corresponding norm (i.e. its Minkowski functional).

Modifying the classical computation from convex geometry (cf. \cite[page 11]{Pisier}) we get
for smooth non-negative function $f$ on $[0,\infty)$ with fast decay at infinity
\begin{align}
\notag \int_{\R^d}f(\|x\|_K)dx&=-\int_{\R^d}\int_{\|x\|_K}^\infty f'(t)dtdx=
-\int_0^\infty \int_{x:\|x\|_K\le t}1\,dx f'(t)dt\\
\label{eq:FB4} &=-\int_0^\infty \vol(tK)f'(t)dt=-\vol(K)\int_0^\infty t^df'(t)dt\\
\notag &=\vol(K)\cdot\int_0^\infty dt^{d-1}f(t)dt.
\end{align}
Plugging $f(t)=e^{-t^p}$ into \eqref{eq:FB4}, we obtain in particular
\begin{align}
\notag\int_{\R^d}e^{-\|x\|_K^p}dx&=\vol(K)\cdot\int_0^\infty dt^{d-1}e^{-t^p}dt=\vol(K)\cdot \frac{d}{p}\cdot \int_0^\infty s^{d/p-1}e^{-s}ds\\
\label{eq:FB1}&=\frac{d\vol(K)\Gamma(d/p)}{p}=\vol(K)\Gamma(1+d/p).
\end{align}
This formula can already be used to give the volume of the $\ell^d_p$-unit ball. Indeed, we get for $K=B_p^d$
\begin{align}
\notag\vol(B_p^d)\Gamma(1+d/p)&=\int_{\R^d}e^{-\|x\|_p^p}dx=\Bigl(2\int_0^\infty e^{-t^p}dt\Bigr)^d=2^d\Bigl(\frac{1}{p}\int_0^\infty s^{1/p-1}e^{-s}ds\Bigr)^d\\
\label{eq:FB2}&=2^d\Bigl(\frac{\Gamma(1/p)}{p}\Bigr)^d=2^d\Gamma(1+1/p)^d,
\end{align}
giving \eqref{eq:vol} again. Choosing $K$ to be the unit ball of $\ell_q^n(\ell_p^m(\R))$, we get by \eqref{eq:FB1}
\begin{align}
\notag\Gamma\Bigl(1+\frac{mn}{q}\Bigr)\vol(B_{p,q}^{m,n}(\R))&=\int_{\R^{n\times m}}e^{-\|x\|^q_{p,q}}dx\\
\notag&=\int_{\R^{n\times m}}\exp\Bigl(-\sum_{j=1}^n\bigl(\sum_{k=1}^m|x_{j,k}|^p\bigr)^{q/p}\Bigr)dx\\
\label{eq:FB3}&=\prod_{j=1}^n\, \int_{\R^{m}}\exp\Bigl(-\bigl(\sum_{k=1}^m|x_{j,k}|^p\bigr)^{q/p}\Bigr)dx_{j,\cdot}\\
\notag&=\Bigl(\int_{\R^{m}}\exp\Bigl(-\bigl(\sum_{k=1}^m|x_{k}|^p\bigr)^{q/p}\Bigr)dx\Bigr)^n\\
\notag&=\Bigl(\int_{\R^{m}}e^{-\|x\|_p^{q}}dx\Bigr)^n=\Bigl[\Gamma\bigl(1+\frac{m}{q}\bigr)\vol(B_p^m)\Bigr]^n,
\end{align}
where we have used Fubini's theorem and \eqref{eq:FB1} again in the last line. Plugging \eqref{eq:FB2}
with $m$ instead of $d$ gives finally
\begin{align*}
\vol(B_{p,q}^{m,n}(\R))&=\frac{1}{\Gamma\Bigl(1+\frac{mn}{q}\Bigr)}\Bigl[\Gamma\bigl(1+\frac{m}{q}\bigr)\frac{2^m\Gamma(1+1/p)^m}{\Gamma(1+m/p)}\Bigr]^n\\
&=2^{mn}\frac{\Gamma\left(\frac1p+1\right)^{mn}}{\Gamma\left(\frac{nm}q+1\right)}\frac{\Gamma\left(\frac mq+1\right)^n}{\Gamma\left(\frac mp+1\right)^n}.
\end{align*}

As pointed to us by Aicke Hinrichs, this approach can be easily generalized to calculate
the volume of the unit ball of the space $\ell_q^n(X_j)$, where $X_j$ are $d_j$-dimensional (quasi-)Banach
spaces and $\|x\|_{\ell_q^n(X_j)}=\Bigl(\sum_{j=1}^n\|x_j\|^q_{X_j}\Bigr)^{1/q}$ giving
\begin{align}\label{eq:ah}
\vol(B_{\ell_q^n(X_j)})=\frac{\prod_{j=1}^n\Gamma(1+\frac{d_j}{q})\vol(B_{X_j})}{\Gamma\Bigl(1+\frac{\sum_{j=1}^n d_j}{q}\Bigr)}.
\end{align}

Finally, we observe that \eqref{eq:FB1} and \eqref{eq:FB3} can be easily adapted to calculate the volume of an anisotropic unit ball
\begin{align*}
 B_{p_1,\dotsc,p_n}=\{x\in\Rn:\|x\|_{p_1,\dotsc,p_n}=|x_1|^{p_1}+\dotsb+|x_n|^{p_n}\leq1\}.
\end{align*}
The gauge expression $\|x\|_{p_1,\dotsc,p_n}$ clearly satisfies
\begin{align*}
 \|tx\|_{p_1,\dotsc,p_n}=t^{p_1+\dots+p_n}\|x\|_{p_1,\dotsc,p_n}\quad\text{for }t>0.
\end{align*}
What we further need is
\begin{align}%
\notag\vol(\{x\in\Rn:\|x\|_{p_1,\dotsc,p_n}\leq t\})&=\vol(\{x\in\R^n:|x_1|^{p_1}+\dots+|x_n|^{p_n}\le t\})\\
\notag&=\vol\Bigl(\Bigl\{x\in\R^n:\Bigl|\frac{x_1}{t^{1/p_1}}\Bigr|^{p_1}+\dots+\Bigl|\frac{x_n}{t^{1/p_n}}\Bigr|^{p_n}\le 1\Bigr\}\Bigr)\\
\label{question}&=\vol(D_{\tau}B_{p_1,\dots,p_n})\\
\notag&=t^{1/p_1+\dots+1/p_n}\vol(B_{p_1,\dots,p_n})
\end{align}
for every $t>0$. Here, $D_{\tau}$ denotes a diagonal matrix with $t^{1/p_1},\dots,t^{1/p_n}$ on the diagonal.
With the help of $\eqref{question}$, we get the analogue of \eqref{eq:FB4}
\begin{align}
\notag
\int_{\R^d}f(\|x\|_{p_1,\dotsc,p_n})dx&=-\int_{\R^d}\int_{\|x\|_{p_1,\dotsc,p_n}}^\infty
f'(t)dtdx=
-\int_0^\infty \int_{x:\|x\|_{p_1,\dotsc,p_n}\le t}1\,dx f'(t)dt\\
\label{eq:mixed_n} &
=-\vol(B_{p_1,\dotsc,p_n})\int_0^\infty
t^{1/p_1+\dots1/p_n}f'(t)dt\\
\notag
&=\vol(B_{p_1,\dotsc,p_n})\left(\frac1{p_1}+\dots+\frac1{p_n}\right)\cdot\int_0^\infty
t^{1/p_1+\dots1/p_n-1}f(t)dt,
\end{align}
where $f$ is a smooth non-negative function on $[0,\infty)$ with fast decay at
infinity.\\
Now, using $f(t)=e^{-t}$ gives us
\begin{align}
 \label{eq:volmixed}
 \vol(B_{p_1,\dotsc,p_n})=2^n\frac{\Gamma(1+1/p_1)\cdot\ldots\cdot\Gamma(1+1/p_n)}{\Gamma(1+1/p_1+\dots+1/p_n)},
\end{align}
which coincides with the result obtained by Dirichlet in \cite{Diri}. Although one could easily combine for example \eqref{eq:volmixed}
with \eqref{eq:ah} to give volumes of more and more complicated bodies, we do not go into details and leave this to the interested reader.

\subsection{Unit ball in $\ell^n_q(\ell_p^m(\C))$}
If $z=(z_{j,k})_{j,k=1}^{n,m}$ are complex numbers, then we define for all $0<p,q\leq\infty$
$$
\|z\|_{p,q}=\Biggl(\sum_{j=1}^n\Bigl(\sum_{k=1}^m |z_{j,k}|^p\Bigr)^{q/p}\Biggr)^{1/q}\quad\text{and}\quad B_{p,q}^{m,n}(\C)=\{z\in\C^{n\times m}:\|z\|_{p,q}\le 1\}.
$$
Since $\C$ maybe identified with $\R^2$ we can reformulate
\begin{align*}
 B_{p,q}^{m,n}(\C)=\left\{u,v\in\R^{n\times m}:\sum_{j=1}^n\Bigl(\sum_{k=1}^m (u^2_{j,k}+v_{j,k}^2)^{p/2}\Bigr)^{q/p}\le 1\right\}.
\end{align*}
In the non-mixed case the following result holds.
\begin{prop}[Proposition 3.2.1 in \cite{EdTrie}]
Let $0<p\leq\infty$ then the volume of the unit ball in $\ell_p^m(\C)$ equals
\begin{align*}
\vol(B_p^m(\C))=\pi^m\frac{\Gamma\left(\frac2p+1\right)^m}{\Gamma\left(\frac{2m}p+1\right)}=\left(\frac{\pi}{2}\right)^m\vol\left(B_{\frac p2}^m(\R)\right).
\end{align*}
\end{prop}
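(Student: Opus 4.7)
The plan is to recycle the Gaussian-integral technique from Section~\ref{sec:FB}. Identifying $\C^m$ with $\R^{2m}$ via $z_j = u_j + iv_j$, we view $K = B_p^m(\C)$ as a symmetric body of dimension $d = 2m$ and apply identity \eqref{eq:FB1}, which yields
$$
\vol(B_p^m(\C))\,\Gamma\Bigl(1+\tfrac{2m}{p}\Bigr) = \int_{\R^{2m}} e^{-\|(u,v)\|_{p,\C}^p}\,du\,dv,
$$
where $\|(u,v)\|_{p,\C}^p = \sum_{j=1}^m (u_j^2 + v_j^2)^{p/2}$.

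Because the exponent separates across the pairs $(u_j,v_j)$, Fubini factorizes the right-hand side into the product of $m$ identical two-dimensional integrals. Each factor is evaluated by passing to polar coordinates and performing the substitution $s = r^p$:
$$
\int_{\R^2} e^{-(u^2+v^2)^{p/2}}\,du\,dv = 2\pi\int_0^\infty r\,e^{-r^p}\,dr = \frac{2\pi}{p}\,\Gamma(2/p) = \pi\,\Gamma(1+2/p).
$$
Combining these steps produces
$$
\vol(B_p^m(\C)) = \frac{\pi^m\,\Gamma(1+2/p)^m}{\Gamma(1+2m/p)},
$$
and the second displayed equality in the proposition then follows by comparing the right-hand side with the formula of Proposition~\ref{prop} applied with $p$ replaced by $p/2$, since $\vol(B_{p/2}^m(\R)) = 2^m \Gamma(2/p+1)^m / \Gamma(2m/p+1)$.

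I do not anticipate any serious obstacle: the computation is essentially the one used in Section~\ref{sec:FB} to re-derive \eqref{eq:vol}, with the one-dimensional integral $\int_0^\infty e^{-t^p}\,dt$ upgraded to the two-dimensional radial integral above that produces an extra factor $\pi$ per coordinate. The limiting case $p = \infty$ (where $B_\infty^m(\C)$ is a polydisc of volume $\pi^m$) and the case $0 < p < 1$ (where one works with the gauge functional rather than a norm, exactly as in Section~\ref{sec:FB}) are consistent with the formula without modification. Alternatively, one could deduce the result in a single line from the generalization \eqref{eq:ah}, taking $q = p$, $X_j = \C$ with Euclidean norm and $d_j = 2$, so that $\vol(B_{X_j}) = \pi$.
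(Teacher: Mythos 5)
Your argument is correct. Note first that the paper does not actually prove this proposition: it is quoted from Edmunds--Triebel with a citation, and the only in-paper computation in the same spirit is the proof of the complex \emph{mixed} theorem that follows it, which proceeds by the polar substitution $u_{j,k}=r_{j,k}\cos\varphi_{j,k}$, $v_{j,k}=r_{j,k}\sin\varphi_{j,k}$ followed by $s_{j,k}=r_{j,k}^p$, and then reduces everything to the real-case integral \eqref{s_int}, yielding $\vol(B_{p,q}^{m,n}(\C))=(\pi/2)^{mn}\vol(B_{p/2,q/2}^{m,n}(\R))$ (of which the proposition is the case $n=1$, $q=p$). Your route is genuinely different in flavour: you invoke the Laplace-functional identity \eqref{eq:FB1} of Section~\ref{sec:FB} for the $2m$-dimensional body $B_p^m(\C)$, factor by Fubini, and evaluate each planar factor $\int_{\R^2}e^{-(u^2+v^2)^{p/2}}\,du\,dv=\pi\,\Gamma(1+2/p)$ in polar coordinates; all the intermediate computations (the substitution $s=r^p$, the value $\tfrac{2\pi}{p}\Gamma(2/p)=\pi\Gamma(1+2/p)$, and the comparison with Proposition~\ref{prop} at exponent $p/2$) check out, and your remarks about $p=\infty$ and $0<p<1$ are exactly the caveats the paper itself accepts when it applies \eqref{eq:FB1} to quasi-norms. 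The paper's substitution approach buys the structural identity $\vol(B_p^m(\C))=(\pi/2)^m\vol(B_{p/2}^m(\R))$ directly, before any gamma functions appear; your approach gives the closed formula in one sweep and, as you note, collapses to a one-line application of \eqref{eq:ah} with $X_j=\C$, $d_j=2$, $\vol(B_{X_j})=\pi$. Both are sound.
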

A similar result holds in the mixed case.
\begin{thm}
 Let $m,n\in\N$ and $0<p,q\leq\infty$ then
 \begin{align*}
  \vol(B_{p,q}^{m,n}(\C))&=\left(\frac{\pi}{2}\right)^{mn}\vol\left(B_{\frac p2,\frac q2}^{m,n}(\R)\right)\\
	&=\pi^{mn}\frac{\Gamma\left(\frac2p+1\right)^{mn}}{\Gamma\left(\frac{2nm}q+1\right)}\frac{\Gamma\left(\frac {2m}q+1\right)^n}{\Gamma\left(\frac {2m}p+1\right)^n}.
 \end{align*}
 \end{thm}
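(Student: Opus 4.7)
The plan is to mimic the alternative Barthe-style proof of Section 3.2, exploiting that the Gaussian identity \eqref{eq:FB1} is insensitive to the underlying scalar field: it only requires a symmetric convex body in some ambient Euclidean space. Since $\C^{nm}$ can be identified with $\R^{2nm}$ via the real/imaginary decomposition $z_{j,k}=u_{j,k}+iv_{j,k}$, the unit ball $B_{p,q}^{m,n}(\C)$ is a symmetric convex body in $\R^{2nm}$ (at least when $p,q\ge 1$; for smaller exponents one uses the standard quasi-norm adjustments already implicit in Section 3.2). The ambient dimension is $d=2mn$, so \eqref{eq:FB1} applied with exponent $q$ reads
\begin{equation*}
\Gamma\Bigl(1+\frac{2mn}{q}\Bigr)\vol(B_{p,q}^{m,n}(\C))=\int_{\R^{2mn}}e^{-\|z\|_{p,q}^q}dz.
\end{equation*}

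Next I would exploit the row-wise structure of the $\ell^n_q(\ell_p^m)$-norm: since $\|z\|_{p,q}^q=\sum_{j=1}^n\bigl(\sum_{k=1}^m|z_{j,k}|^p\bigr)^{q/p}$ separates into a sum over the $n$ rows, Fubini yields
\begin{equation*}
\int_{\R^{2mn}}e^{-\|z\|_{p,q}^q}dz=\Bigl(\int_{\C^m}e^{-\|z\|_p^q}dz\Bigr)^n.
\end{equation*}
Applying \eqref{eq:FB1} a second time, now to $K=B_p^m(\C)$ in the ambient $\R^{2m}$ and still with exponent $q$, gives
\begin{equation*}
\int_{\C^m}e^{-\|z\|_p^q}dz=\Gamma\Bigl(1+\frac{2m}{q}\Bigr)\vol(B_p^m(\C)).
\end{equation*}
Combining these three displays yields the clean recursion
\begin{equation*}
\vol(B_{p,q}^{m,n}(\C))=\frac{\bigl[\Gamma(1+\tfrac{2m}{q})\vol(B_p^m(\C))\bigr]^n}{\Gamma(1+\tfrac{2mn}{q})},
\end{equation*}
which is exactly the instance of Hinrichs's formula \eqref{eq:ah} with $X_j=\ell_p^m(\C)$ and $d_j=2m$.

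To finish, I would substitute the known expression $\vol(B_p^m(\C))=\pi^m\Gamma(2/p+1)^m/\Gamma(2m/p+1)$ from the preceding proposition, producing
\begin{equation*}
\vol(B_{p,q}^{m,n}(\C))=\pi^{mn}\frac{\Gamma(\tfrac{2}{p}+1)^{mn}}{\Gamma(\tfrac{2mn}{q}+1)}\cdot\frac{\Gamma(\tfrac{2m}{q}+1)^n}{\Gamma(\tfrac{2m}{p}+1)^n},
\end{equation*}
which is the second claimed equality. The first equality, $\vol(B_{p,q}^{m,n}(\C))=(\pi/2)^{mn}\vol(B_{p/2,q/2}^{m,n}(\R))$, then follows by direct inspection: substituting $p\mapsto p/2$, $q\mapsto q/2$ in Theorem \ref{thm_mixed} produces exactly the same rational expression in gamma functions with $2^{mn}$ in place of $\pi^{mn}$, so the ratio is $(\pi/2)^{mn}$.

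The argument has essentially no hard step; the only item deserving care is bookkeeping the dimension doubling $m\mapsto 2m$, $mn\mapsto 2mn$ when reading \eqref{eq:FB1} over $\C^m$ and $\C^{mn}$, and handling $p=\infty$ or $q=\infty$ by the same continuity/limiting modifications mentioned for the real case. No new integral computation is required: everything is reduced to \eqref{eq:FB1}, Fubini, and the complex single-index volume formula.
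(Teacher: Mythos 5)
Your proof is correct, but it follows a genuinely different route from the one in the paper. The paper proves the first identity $\vol(B_{p,q}^{m,n}(\C))=(\pi/2)^{mn}\vol(B_{p/2,q/2}^{m,n}(\R))$ directly: it writes each complex entry in polar coordinates $u_{j,k}=r_{j,k}\cos\varphi_{j,k}$, $v_{j,k}=r_{j,k}\sin\varphi_{j,k}$, integrates out the angles to pick up $(2\pi)^{mn}$ and the Jacobian factors $\prod r_{j,k}$, substitutes $s_{j,k}=r_{j,k}^p$, and recognizes the resulting integral as exactly the one in \eqref{s_int} with $p/2,q/2$ in place of $p,q$; the closed gamma-function formula then follows from Theorem \ref{thm_mixed}. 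You instead run the Barthe-style argument of Section \ref{sec:FB} over the ambient space $\R^{2mn}$: apply \eqref{eq:FB1} with $d=2mn$ and exponent $q$, split by Fubini over the $n$ rows, apply \eqref{eq:FB1} again with $d=2m$, and feed in the known value of $\vol(B_p^m(\C))$ --- i.e.\ you instantiate \eqref{eq:ah} with $X_j=\ell_p^m(\C)$, $d_j=2m$. This yields the second displayed line first, and you recover the first line only a posteriori by comparing gamma-function expressions. Both arguments are sound (the identity \eqref{eq:FB1} needs only the homogeneity $\vol(tK)=t^d\vol(K)$, so the quasi-norm range $p,q<1$ and the limiting cases $p=\infty$ or $q=\infty$ are handled as you indicate); the trade-off is that the paper's substitution explains structurally \emph{why} the complex ball is a rescaled real ball with halved exponents and needs no external input, whereas your route is shorter, requires no new integral computation, but leans on the cited complex one-level formula and obtains the relation to $B_{p/2,q/2}^{m,n}(\R)$ only as a formal coincidence of formulas rather than as a geometric identity.
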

\begin{proof}
 We start by using the substitution $u_{j,k}=r_{j,k}\cos\varphi_{j,k}$ and $v_{j,k}=r_{j,k}\sin\varphi_{j,k}$, then 
 \begin{align*}
  \vol(B_{p,q}^{m,n}(\C))&=\int_{B_{p,q}^{m,n}(\C)}1d(u,v)=(2\pi)^{mn}\int_{\displaystyle\Bigl\{r\ge 0:\sum_{j=1}^n\Bigl(\sum_{k=1}^mr_{j,k}^p\Bigr)^{q/p}\le 1\Bigr\}}\prod_{j=1}^n\prod_{k=1}^m r_{j,k}dr
  \intertext{and using $s_{j,k}=r_{j,k}^p$ we obtain with $A_{p,q}^{m,n}(t)={\displaystyle\Bigl\{s\in\R^{n\times m}:s\ge 0 \text{ and }\sum_{j=1}^n\Bigl(\sum_{k=1}^ms_{j,k}\Bigr)^{q/p}\le t\Bigr\}}$}
  &=\pi^{mn}\left(\frac 2p\right)^{mn}\int_{s\in A_{\frac p2,\frac q2}^{m,n}(1)}\prod_{j=1}^n\prod_{k=1}^m s_{j,k}^{\frac 2p-1}ds.
 \end{align*}
 Now comparing the last integral to \eqref{s_int} immediately implies
 \begin{align*}
  \vol(B_{p,q}^{m,n}(\C))=\left(\frac\pi2\right)^{mn}\vol\left(B_{\frac p2,\frac q2}^{m,n}(\R)\right).
 \end{align*}  
 \end{proof}

\subsection{Special cases}
In this section we present our main result Theorem \ref{thm_mixed} in special cases and compare it with known results. First of all we give a table combining the cases $p,q\in\{1,2,\infty\}$.\\[.5cm]
\begin{center}
\begin{tabular}{|c||c|c|c|}
\hline
\mbox{$\vol(B_{p,q}^{m,n}(\R))$} & $p=1$	&	$p=2$	&	$p=\infty$\\
\hline\hline
$q=1$	& $\frac{2^{mn}}{(mn)!}$	& $\frac{\pi^{\frac{mn}2}}{(mn)!}\frac{(m!)^n}{\Gamma(m/2+1)^n}$	& $2^{mn}\frac{(m!)^n}{(mn)!}$\\
\hline
$q=2$ & $\frac{2^{mn}}{(m!)^n}\frac{\Gamma(m/2+1)^n}{\Gamma(mn/2+1)}$	& $\frac{\pi^{\frac{mn}2}}{\Gamma(mn/2+1)}$	&$2^{mn}\frac{\Gamma(m/2+1)^n}{\Gamma(mn/2+1)}$	\\
\hline
$q=\infty$&	$\frac{2^{mn}}{(m!)^n}$&	$\frac{\pi^{\frac{mn}2}}{\Gamma(m/2+1)^n}$&$2^{mn}$\\
\hline
\end{tabular} 
\end{center}
\paragraph{\underline{$q=\infty$:}}
Now, we consider the special case of $q=\infty$, i.e.
\begin{align*}
B_{p,\infty}^{m,n}(\R)&=\left\{x\in\R^{n\times m}:\max_{1\leq j\leq n}\sum_{k=1}^m|x_{j,k}|^p\leq1\right\}
\intertext{which can be reformulated by}
&=\left\{x\in\R^{n\times m}:\sum_{k=1}^m|x_{j,k}|^p\leq1\text{ for all }1\leq j\leq n\right\}.
\end{align*}
By using Proposition \ref{prop} we can easily calculate
\begin{align*}
\vol(B_{p,\infty}^{m,n}(\R))&=\int_{x\in B_{p,\infty}^{m,n}(\R)}1dx=\left(\int_{\{x\in\R^m:\|x\|_p\leq1\}}1dx\right)^n\\
&=\left(\vol(B_p^m(\R))\right)^n=2^{mn}\frac{\Gamma(1/p+1)^{mn}}{\Gamma(m/p+1)^n},
\end{align*}
which corresponds to \eqref{eq:mixed} for $q=\infty$.

\paragraph{\underline{$p=\infty$:}}

Considering the case $p=\infty$ we can write the unit ball
\begin{align*}
B_{\infty,q}^{m,n}(\R)&=\left\{x\in\R^{n\times m}:\sum_{j=1}^n\max_{1\leq k\leq m}|x_{j,k}|^q\leq1\right\}
\end{align*}
and we get
\begin{align*}
\vol(B_{\infty,q}^{m,n}(\R))&=2^{mn}\int_{x\ge 0,x\in B_{\infty,q}^{m,n}(\R)}1dx
=2^{mn}\int_{\substack{\lambda\geq0\\0\le\lambda_1^q+\dots+\lambda_n^q\le 1}}\int_{\substack{x\ge 0\\ \max_{k=1,\dots,m}x_{j,k}=\lambda_j}}1dx\,d\lambda\\
&=2^{mn} \int_{\substack{\lambda\geq0\\0\le\lambda_1^q+\dots+\lambda_n^q\le 1}} \prod_{j=1}^n (m\lambda_j^{m-1})  d\lambda=
2^{mn} m^n\int_{\substack{\lambda\geq0\\0\le\lambda_1^q+\dots+\lambda_n^q\le 1}} \prod_{j=1}^n \lambda_j^{m-1}  d\lambda,
\end{align*}
where the last but one identity follows from the fact that the surface of the set $\{y\in\R^m: y\ge 0\ \text{and}\ \max_{k=1,\dots,m}y_k=\lambda\}$
is equal to $m\lambda^{m-1}. $

We put $\tau_j=\lambda_j^q$ 
and get
\begin{align*}
\vol(B_{\infty,q}^{m,n})&=2^{mn} m^n\int_{\substack{\tau\geq0\\0\le\tau_1+\dots+\tau_n\le 1}} \prod_{j=1}^n \Bigl(\tau_j^{(m-1)/q}\cdot\frac{1}{q\tau_j^{1-1/q}}\Bigr)  d\tau\\
&=2^{mn} \frac{m^n}{q^n}\int_{\tau\in A^{1,n}_{\frac pm,\frac qm}(1)} \prod_{j=1}^n \tau_j^{m/q-1}  d\tau.
\end{align*}
Comparing the last integral to \eqref{s_int} we see the connection to $\vol(B_{q/m}^n(\R))$ and we obtain
\begin{align*}
\vol(B_{\infty,q}^{m,n})=2^{n(m-1)}\vol(B_{q/m}^n(\R))=2^{mn}\frac{\Gamma(m/q+1)^n}{\Gamma(nm/q+1)},
\end{align*}
i.e. \eqref{eq:mixed} for $p=\infty$.
\paragraph{\underline{$p=q$:}} In this case we can write
\begin{align*}
B_{p,p}^{m,n}(\R)&=\left\{x\in\R^{n\times m}:\sum_{j=1}^n\sum_{k=1}^m|x_{j,k}|^p\leq1\right\}\\
&=B_p^{mn}(\R)
\intertext{and we easily obtain from Proposition \ref{prop} the expected result}
\vol(B_{p,p}^{m,n}(\R))&=2^{mn}\frac{\Gamma(1/p+1)^{mn}}{\Gamma(mn/p+1)}.
\end{align*}
\paragraph{\underline{$m=1$ or $n=1$:}}
In these cases we directly see
\begin{align*}
B^{1,n}_{p,q}(\R)=B^n_q(\R)\qquad\text{as well as}\qquad B^{m,1}_{p,q}(\R)=B^m_p(\R)
\end{align*}
and Proposition \ref{prop} then coincides with \eqref{eq:mixed} in Theorem \ref{thm_mixed}.

\section{Closing remarks}

The elements of the space $\R^{n\times m}$ are usually identified with $n\times m$ matrices. The most important matrix norms (like the nuclear norm or the spectral norm) are,
however, not given by simple conditions on the entries of the matrix but rather on its singular values. We refer to \cite{SR} for results on volumes of unit balls
with respect to these norms.

Theorem \ref{thm_mixed} gives the volume of the unit ball of $\ell_q^n(\ell_p^m).$
On the other hand, it leaves a number of questions open.
Let us briefly discuss the most important ones.
\begin{itemize}
\item On many occurrences in mathematics it is more convenient to work with Lorentz sequence spaces instead of Lebesgue sequence spaces.
If $0<p\le \infty$, the weak-$\ell_p^m$ (quasi-)norm is defined as
$$
\|x\|_{p,\infty}=\max_{j=1,\dots,m} j^{1/p}x_j^*,
$$
where $(x^*_j)^m_{j=1}$ is the non-increasing rearrangement of $x\in\R^m$. To our best knowledge, no closed formula for the volume of the unit ball
$\{x\in\R^m:\|x\|_{p,\infty}\le 1\}$ is available in the literature. The same holds true for the general Lorentz sequence spaces $\ell_{p,q}^m$ with $0<q<\infty$ if $q\not=p.$
\item Comparing \eqref{eq:mixed} with \eqref{eq:prop}, we observe that
\begin{equation}\label{eq:geom}
\vol(B_{p,q}^{m,n}(\R))
=\vol(B_{q}^{n\cdot m}(\R))\cdot\frac{[\vol(B_{p}^{m}(\R))]^n}{[\vol(B_{q}^{m}(\R))]^n}.
\end{equation}
This formula appeared implicitly already in \eqref{eq:FB3}. We leave it open if this formula has some geometric or combinatoric interpretation, which would allow for an non-analytical proof of Theorem \ref{thm_mixed}.
\end{itemize}

{\bf Acknowledgment:} We would like to thank Franck Barthe for careful reading of a previous version of this manuscript,
for communicating the alternative proof given in Section \ref{sec:FB} to us and for pointing us to the reference \cite{SR}.
Furthermore, we thank Aicke Hinrichs for useful discussions.

\end{document}